\newtheorem{theorem}{Theorem}[section]
\theoremstyle{definition}
\newtheorem{definition}[theorem]{Definition}
\theoremstyle{remark}
\newtheorem{remark}[theorem]{Remark}
\numberwithin{equation}{section}
\def\R{\mathbb{R}}
\numberwithin{equation}{section} 
\numberwithin{figure}{section} 
  \theoremstyle{plain}
  \theoremstyle{remark}
  \theoremstyle{definition}
\providecommand{\keywords}[1]
{
\small	
\textbf{\textit{Keywords. }} #1
}
\begin{document}

\title[A new family of curves in space]{A new family of curves in space}
\author[H\'ector Efr\'en Guerrero Mora]
{H\'ector Efr\'en Guerrero Mora}
\date{\today}

\address{Departamento de Matem\'aticas.
	Universidad del Cauca \\ Facultad de Ciencias Naturales Exactas y de la Educaci\'on. Popay\'an, Colombia}
\email{heguerrero@unicauca.edu.co}
\thanks{The author was supported in part by Universidad del Cauca project ID 5124.}

\begin{abstract}

This article defines a new family of curves in space, whose graphs generate shapes similar to whirls.
An intrinsic equation is found, in terms of curvature and torsion, which gives necessary and sufficient conditions for the existence of this family. Its position vector is found, with arc length parameter and finally a way of generating a great variety of examples is shown; in particular an application to rectifying curves is given.
\end{abstract}
\subjclass[2000]{Primary 53A04; Secondary 53A55}
\keywords{Whirl curve, rectifying curve, geodesics, cones in space.}
\dedicatory{To my family.}
\maketitle

\section{MOTIVATION.}
There is a great variety of examples that we can see in nature with swirl shapes; it is very common to find structures, in our environment, with these types of configurations. In the design of creation you can see from huge galaxies to tiny structures with these peculiar shapes.\\ The curves with whirl shapes, that is, curves with spiral type configurations, have captivated the minds of many geometricians. For this reason there are mathematical models that describe spiral configurations.
\section{WHIRL CURVE.}
In this section we will introduce one new families of curves and it is intended that with it, mathematical models can be generated that make an approach to the understanding of the formation and evolution of these swirling structures that appear in nature.
\newpage
\begin{definition}
	A whirl curve is a curve with curvature greater than zero, nonzero torsion, and presenting the property that the inner product formed by its normal vector with a fixed direction is proportional to the inner product formed by its tangent vector with that same direction.
\end{definition}
Next, we are going to show some properties of a whirl curve.
\begin{theorem}\label{Caracterizacion}
	Let $\alpha$ be a curve, with positive curvature $\kappa$ and nonzero torsion $\tau$, both differentiable functions.
	\\ $\alpha$ is a whirl curve if and only if its curvature $\kappa=\kappa(s)$ and its torsion $\tau=\tau(s)$ satisfy:
	\begin{equation}\label{ecuación intrinseca}
		\tau=\frac{(1+\lambda^2)(\frac{\tau}{\kappa})'}{\lambda(1+\lambda^2+(\frac{\tau}{\kappa})^2)}\ ,
	\end{equation}where $\lambda$ is the proportionality constant, with $\lambda\neq 0$.
\end{theorem}
\begin{proof}
	let $\alpha$ be a whirl curve and let $\textbf{t},\textbf{n}$ and $\textbf{b}$ be its tangent, normal, and binormal vectors, respectively. Then, there exists a constant vector $\textbf{d}$ of norm one such that
	\begin{equation*}
		\left <\textbf{n},\textbf{d}\right >=\lambda\left <\textbf{t},\textbf{d}\right >,
	\end{equation*} for some constant $\lambda$ other than zero.
	Deriving the previous expression and using the Frenet equations, we have
	\begin{equation*}
		\tau\left <\textbf{b},\textbf{d}\right >=(1+\lambda^2)\kappa\left <\textbf{t},\textbf{d}\right >.
	\end{equation*}
	\\Since $\{\textbf{t},\textbf{n},\textbf{b}\}$, forms an orthonormal basis of  $\R^3$, $\tau \neq 0$ and $\textbf{d}$ is a vector of norm one,  we can write
	\begin{eqnarray*}
		\textbf{d}&=&\left <\textbf{t},\textbf{d}\right >\textbf{t}+\lambda\left <\textbf{t},\textbf{d}\right >\textbf{n}+(1+\lambda^2)(\frac{\kappa}{\tau})\left <\textbf{t},\textbf{d}\right >\textbf{b},
	\end{eqnarray*}
	this is
	\begin{equation*}
		\left <\textbf{t},\textbf{d}\right >=\frac{\pm1}{\sqrt{(1+\lambda^2)^2(\frac{\kappa}{\tau})^2+(1+\lambda^2)}}.
	\end{equation*} This implies that
	\begin{equation*}
		\textbf{d}=\pm\frac{(\textbf{t}+\lambda\textbf{n}+(1+\lambda^2)(\frac{\kappa}{\tau})\textbf{b})}{\sqrt{(1+\lambda^2)^2(\frac{\kappa}{\tau})^2+(1+\lambda^2)}}.
	\end{equation*}
	And since  $\textbf{d}$ is a constant vector, without loss of generality, we can consider the positive sign and when finding its derivative, we have
	\begin{eqnarray*}
		\textbf{0}&=&-\frac{(1+\lambda^2)(\frac{\kappa}{\tau})(\frac{\kappa}{\tau})'+\lambda\kappa((1+\lambda^2)(\frac{\kappa}{\tau})^2+1)}{(1+\lambda^2)^{1/2}((1+\lambda^2)(\frac{\kappa}{\tau})^2+1)^{3/2}}\textbf{t}\\
		&-&\lambda\left (\frac{(1+\lambda^2)(\frac{\kappa}{\tau})(\frac{\kappa}{\tau})'+\lambda\kappa((1+\lambda^2)(\frac{\kappa}{\tau})^2+1)}{(1+\lambda^2)^{1/2}((1+\lambda^2)(\frac{\kappa}{\tau})^2+1)^{3/2}}\right)\textbf{n}\\
		&+&\frac{(1+\lambda^2)(\frac{\kappa}{\tau})(\frac{\kappa}{\tau})'+\lambda\kappa((1+\lambda^2)(\frac{\kappa}{\tau})^2+1)}{\frac{\kappa}{\tau}(1+\lambda^2)^{1/2}((1+\lambda^2)(\frac{\kappa}{\tau})^2+1)^{3/2}}\textbf{b}.
	\end{eqnarray*}And this implies that
	\begin{equation*}
		\tau=\frac{(1+\lambda^2)(\frac{\tau}{\kappa})'}{\lambda(1+\lambda^2+(\frac{\tau}{\kappa})^2)} \ ,
	\end{equation*}where $\lambda$ is the constant of proportionality other than zero.
	Now, suppose that the curve $\alpha=\alpha(s)$, with curvature $\kappa=\kappa(s)$ and torsion $\tau=\tau(s)$ satisfies the intrinsic equation (\ref{ecuación intrinseca})
	and let us define the next vector
	\begin{equation}\label{eje}
		\textbf{d}=\pm\frac{(\textbf{t}+\lambda\textbf{n}+(1+\lambda^2)(\frac{\kappa}{\tau})\textbf{b})}{\sqrt{(1+\lambda^2)^2(\frac{\kappa}{\tau})^2+(1+\lambda^2)}}.
	\end{equation}It is very easy to appreciate that vector $\textbf{d}$ is of norm one, that its derivative is equal to zero and a direct calculation shows that
	$\left <\textbf{n},\textbf{d}\right >-\lambda\left <\textbf{t},\textbf{d}\right >=0$, this is $\alpha$ is a whirl curve.
\end{proof}
\begin{definition}
	The vector $\textbf{d}$ defined by (\ref{eje}) is called the axis of the whirl curve. 
\end{definition}
The proof of the following theorem is based on a technique that appears in \cite{GueMora}.
\begin{theorem}\label{parametrizacion}
	Let $\alpha$ be a curve, with positive curvature $\kappa$ and nonzero torsion $\tau$, both continuously differentiable functions.
	\\$\alpha$ is a whirl curve if and only if its position vector $\alpha(s)=(x(s),y(s),z(s))$, except for a rigid motion, has natural representation of the form:
	\\
	\\
	$x(s)=\\
	\int\sqrt{1-\frac{e^{2(\lambda\int_{s_0}^s{\kappa}ds-B)}}{1+\lambda^2}}
	\cos(\arctan[\frac{\sqrt{1-e^{2(\lambda\int_{s_0}^s{\kappa}ds-B)}}}{\lambda}]-
	\frac{\emph{arctanh}[\sqrt{1-e^{2(\lambda\int_{s_0}^s{\kappa}ds-B)}}]}{\lambda})ds,$\\ \\
	$y(s)=\\
	\int\sqrt{1-\frac{e^{2(\lambda\int_{s_0}^s{\kappa}ds-B)}}{1+\lambda^2}}
	\sin(\arctan[\frac{\sqrt{1-e^{2(\lambda\int_{s_0}^s{\kappa}ds-B)}}}{\lambda}]-
	\frac{\emph{arctanh}[\sqrt{1-e^{2(\lambda\int_{s_0}^s{\kappa}ds-B)}}]}{\lambda})ds,$\\ \\
	$z(s)=\pm\int{\frac{e^{(\lambda\int_{s_0}^s{\kappa}ds-B)}}{\sqrt{1+\lambda^2}}}ds,$\ \  where $\lambda\int_{s_0}^s{\kappa}ds<B.$	\\ \\
\end{theorem}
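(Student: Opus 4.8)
The plan is to realize the constant axis $\textbf{d}$ of (\ref{eje}) as a fixed coordinate direction and then recover the coordinate functions of $\alpha$ one Frenet datum at a time. First I would apply a rigid motion so that the unit axis furnished by Theorem~\ref{Caracterizacion} becomes $\textbf{d}=(0,0,1)$ and $\alpha(s_0)$ is the origin. Under this normalization $z'(s)=\langle\alpha'(s),\textbf{d}\rangle=\langle\textbf{t},\textbf{d}\rangle$, so the third coordinate is governed entirely by the tangential component of the axis, while $|\textbf{t}|=1$ ties the first two coordinates to it.

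Next, to evaluate $\langle\textbf{t},\textbf{d}\rangle$ I would integrate the intrinsic equation of Theorem~\ref{Caracterizacion}. Setting $w=\tau/\kappa$ it reads $\lambda\kappa=\frac{(1+\lambda^2)w'}{w(1+\lambda^2+w^2)}$; the partial-fraction identity $\frac{1+\lambda^2}{w(1+\lambda^2+w^2)}=\frac1w-\frac{w}{1+\lambda^2+w^2}$ makes the right-hand side an exact derivative, so integration gives $e^{\lambda\int_{s_0}^s\kappa\,ds-B}=\frac{|w|}{\sqrt{1+\lambda^2+w^2}}$, with $B$ the constant of integration. Abbreviating $E:=e^{\lambda\int_{s_0}^s\kappa\,ds-B}$, this is equivalent to $(1+\lambda^2)(\kappa/\tau)^2+1=E^{-2}$, and feeding it into (\ref{eje}) collapses the orthonormal expansion to $\langle\textbf{t},\textbf{d}\rangle=\pm E/\sqrt{1+\lambda^2}$. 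Integrating yields the stated $z(s)$, and the hypothesis $\lambda\int_{s_0}^s\kappa\,ds<B$ is exactly the condition $E<1$ that keeps $1-z'^2>0$.

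Writing $\textbf{t}=(\sin\phi\cos\theta,\ \sin\phi\sin\theta,\ \cos\phi)$ with $\cos\phi=z'=E/\sqrt{1+\lambda^2}$ immediately gives $x'=\sqrt{1-z'^2}\,\cos\theta$ and $y'=\sqrt{1-z'^2}\,\sin\theta$, where $\sqrt{1-z'^2}=\sqrt{1-E^2/(1+\lambda^2)}$ is precisely the radical in the statement. It remains to identify $\theta$. Differentiating $\langle\textbf{t},\textbf{d}\rangle$ and using the Frenet equations gives $(\cos\phi)'=\kappa\langle\textbf{n},\textbf{d}\rangle=\lambda\kappa\cos\phi$, hence $\phi'=-\lambda\kappa\cot\phi$; combining this with $\kappa^2=|\textbf{t}'|^2=\phi'^2+\theta'^2\sin^2\phi$ solves for $\theta'$. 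The key simplification is $\sin^2\phi-\lambda^2\cos^2\phi=1-E^2$, after which
\begin{equation*}
	\theta'=\frac{(1+\lambda^2)\,\kappa\sqrt{1-E^2}}{\,1+\lambda^2-E^2\,}.
\end{equation*}

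The main obstacle is the closed-form integration of this $\theta'$. Rather than integrate blindly, I would verify by differentiation that
\begin{equation*}
	\theta=\arctan\!\Big(\tfrac{\sqrt{1-E^2}}{\lambda}\Big)-\tfrac1\lambda\,\mathrm{arctanh}\big(\sqrt{1-E^2}\big)
\end{equation*}
has exactly the derivative above: writing $R=\sqrt{1-E^2}$ and using $R'=-\lambda\kappa(1-R^2)/R$, which follows from $E'=\lambda\kappa E$, reduces the check to a single rational identity. Substituting this $\theta$ into $x'$ and $y'$ and integrating then produces the displayed $x(s),y(s),z(s)$, completing the forward implication. For the converse I would start from the explicit coordinate functions, confirm that they define a unit-speed curve whose curvature and torsion are the prescribed $\kappa,\tau$ (or invoke the fundamental theorem of space curves once the invariants match), and check that $\textbf{d}=(0,0,1)$ satisfies $\langle\textbf{n},\textbf{d}\rangle-\lambda\langle\textbf{t},\textbf{d}\rangle=0$, so $\alpha$ is a whirl curve by definition. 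I expect the fussiest points to be the bookkeeping of the $\pm$ choices and confirming that $B$ is genuinely the integration constant, so that the normalizing factor $1/\sqrt{1+\lambda^2}$ emerges correctly.
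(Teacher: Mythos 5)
Your proposal is correct and follows essentially the same route as the paper: normalize the axis to $(0,0,1)$, write $\textbf{t}$ in spherical coordinates, integrate the intrinsic equation to obtain $e^{\lambda\int_{s_0}^s\kappa\,ds-B}$ in terms of $\tau/\kappa$, identify $\cos\phi=\pm e^{\lambda\int_{s_0}^s\kappa\,ds-B}/\sqrt{1+\lambda^2}$, compute $\theta'$, and verify the closed-form antiderivative, with the converse checked by computing $\kappa_\alpha$ and $\tau_\alpha$ of the explicit parametrization. Your two shortcuts --- reading $\langle\textbf{t},\textbf{d}\rangle$ directly off the axis formula (\ref{eje}) instead of the paper's detour through the ansatz $\xi=\pm e^{\lambda\int\kappa\,ds-C}$ and the matching of the constants $B$ and $C$, and extracting $\theta'$ from $\kappa^2=\phi'^2+\theta'^2\sin^2\phi$ rather than from $\langle\textbf{b},\textbf{d}\rangle$ (which leaves only the same sign ambiguity the paper also carries as $\pm$) --- are both valid and slightly cleaner, but do not change the architecture of the argument.
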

\begin{proof}
	Suppose that $\alpha$ is a whirl curve.
	Let us find a parametrization by arc length of the curve $\alpha.$
	Let us write its tangent vector $\textbf{t}=\textbf{t}(s)$ in spherical coordinates,
	\begin{equation}\label{vector tangente}
		\textbf{t} =(\sin \phi\cos \theta,\sin \phi\sin \theta,\cos \phi).
	\end{equation}Therefore, its normal vector $\textbf{n}=\textbf{n}(s)$ and its binormal vector $\textbf{b}=\textbf{b}(s)$ are given by
	\begin{eqnarray*}\textbf{n}&=&
		(\frac{\phi'\cos \phi\cos \theta-\theta'\sin \phi\sin \theta}{\kappa},\frac{ \phi'\cos \phi\sin \theta +\theta'\sin\phi\cos \theta}{\kappa},\frac{-\phi'\sin \phi}{\kappa}),
		\\ \textbf{b}&=&(\frac{-\phi'\sin \theta}{\kappa}-\frac{\theta'\sin 2\phi\cos\theta}{2\kappa},\frac{-\phi'\cos \theta}{\kappa}-\frac{\theta'\sin 2\phi\sin \theta}{2\kappa},\frac{\theta'\sin^2 \phi}{\kappa}).\end{eqnarray*}
	It is known that its Frenet trihedron $\textbf{t},\textbf{n},\textbf{b}$ forms an orthonormal basis of $\R^3$ and satisfies
	\begin{eqnarray*}
		\frac{d\textbf{t}}{ds}&=&\kappa \textbf{n},\\
		\frac{d\textbf{n}}{ds}&=&-\kappa\textbf{t}+\tau\textbf{b},\\
		\frac{d\textbf{b}}{ds}&=&-\tau \textbf{n}.
	\end{eqnarray*}For a fixed unit vector $D$, we have
	\begin{equation*}
		\left <\frac{d\textbf{t}}{ds},D\right >=\left<\kappa \textbf{n},D\right >=\kappa\left<\textbf{n},D\right>,
	\end{equation*} and 
	\begin{eqnarray*}
		\left<\frac{d\textbf{n}}{ds},D\right>&=&\left<-\kappa\textbf{t}+\tau\textbf{b},D\right>=
		-\kappa\left<\textbf{t},D\right>+\tau\left<\textbf{b},D\right>\\&=&-\kappa \left<\textbf{t},D\right>\pm\tau\sqrt{1-\left<\textbf{t},D\right>^2-\left<\textbf{n},D\right>^2}.
	\end{eqnarray*}
	Now, considering the definition of the curve $\alpha$, we have\\ $\left<\textbf{n},\textbf{d}\right>=\lambda\left< \textbf{t},\textbf{d}\right>$,
	for some constant vector $\textbf{d}$ of magnitude one. It is clear that there is an orthogonal transformation $\sigma:\R^3\rightarrow \R^3$, with a positive determinant such that $\sigma(\textbf{d})=(0,0,1)$ and knowing that the curvature $\kappa$ and the torsion $\tau$ are invariant, given a rigid motion, we have that the $\alpha$ and $\sigma\circ\alpha$ curves have the same curvature and torsion functions. We can assume that $D=\textbf{d}=(0,0,1)$. 
	Our goal is to find $\phi$ and $\theta.$
	Writing $\xi=\left< \textbf{t},\textbf{d}\right>$, let us look for a solution of the  form $\frac{1}{\kappa}\frac{d\xi}{ds}=\lambda\xi$,
	with $\xi\neq 0$, this is $\xi =\pm e^{(\lambda\int_{s_0}^s{\kappa ds}-C)}$,
	where $C$ is an integration constant.\\
	Now, let us use the intrinsic equation of curve $\alpha$, which is the same for curve $\sigma\circ \alpha.$
	\begin{equation*}
		\tau=\frac{(1+\lambda^2)(\frac{\tau}{\kappa})'}{\lambda(1+\lambda^2+(\frac{\tau}{\kappa})^2)} \ ,
	\end{equation*}where $\lambda$ is the proportionality constant. Which we can write it as:
	\begin{equation}\label{Curvatura}
		\kappa=\frac{(1+\lambda^2)(\frac{\tau}{\kappa})'}{\lambda(\frac{\tau}{\kappa})(1+\lambda^2+(\frac{\tau}{\kappa})^2)} \ ,
	\end{equation}integrating the above equation, we have:
	\begin{equation*}
		\int_{s_0}^s{\kappa(s)}ds=\frac{1}{\lambda}(\ln[\frac{1}{\sqrt{1+\lambda^2}}\mid\frac{\tau(s)}{\kappa(s)} \mid]-\ln[\sqrt{1+\frac{1}{1+\lambda^2}(\frac{\tau(s)}{\kappa(s)})^2}]+B),
	\end{equation*} where $B=-\displaystyle\ln(\frac{\frac{1}{\sqrt{1+\lambda^2}}\mid\frac{\tau(s_0)}{\kappa(s_0)}\mid}{\sqrt{1+\frac{1}{1+\lambda^2}(\frac{\tau(s_0)}{\kappa(s_0)})^2}})$.\\
	Then \begin{equation}\label{Exponencial}
		e^{\lambda\int_{s_0}^s\kappa ds-B}=\displaystyle\frac{\frac{1}{\sqrt{1+\lambda^2}}\mid\frac{\tau}{\kappa}\mid}{\sqrt{1+\frac{1}{1+\lambda^2}(\frac{\tau}{\kappa})^2}}<1,
	\end{equation} this implies
	\begin{equation*}
		\frac{1}{\sqrt{1+\lambda^2}}\frac{\tau}{\kappa}=\pm\frac{e^{(\lambda\int_{s_0}^s\kappa ds)}}{\sqrt{e^{2B}-e^{2\lambda\int_{s_0}^s\kappa ds}}},
	\end{equation*}
	this is
	\begin{equation*}
		\tau=\pm\frac{\sqrt{1+\lambda^2} \kappa e^{(\lambda\int_{s_0}^s\kappa ds-B)}}{\sqrt{1-e^{2(\lambda\int_{s_0}^s \kappa ds-B)}}},
	\end{equation*}
	Now, let us find the relationships between $C$ and $B$. To do this, consider the equation:
	\begin{eqnarray*}
		\frac{d}{ds}(\frac{1}{\kappa}\frac{d\xi}{ds})=\frac{d}{ds}\left<\frac{\textbf{t}'}{\kappa},\textbf{d}\right>&=&-\kappa \left<\textbf{t},\textbf{d}\right>\pm\tau\sqrt{1-\left<\textbf{t},\textbf{d}\right>^2-\left<\textbf{n},\textbf{d}\right>^2}\\
		&=&-\kappa \xi\pm\tau\sqrt{1-(1+\lambda^2)\xi^2},	
	\end{eqnarray*}
	and consider the case
	\begin{equation*}
		\xi =e^{(\lambda\int_{s_0}^s\kappa ds-C)},\ \	\tau=\frac{\sqrt{1+\lambda^2} \kappa e^{(\lambda\int_{s_0}^s \kappa ds-B)}}{\sqrt{1-e^{2(\lambda\int_{s_0}^s \kappa ds-B)}}}.
	\end{equation*}
	Replacing, we have
	\begin{eqnarray*}
		\lambda^2\kappa e^{(\lambda\int_{s_0}^s\kappa ds-C)}=&&-\kappa e^{(\lambda\int_{s_0}^s\kappa ds-C)}\\
		&&\pm\frac{\sqrt{1+\lambda^2} \kappa e^{(\lambda\int_{s_0}^s \kappa ds-B)}}{\sqrt{1-e^{2(\lambda\int_{s_0}^s \kappa ds-B)}}}\sqrt{1-(1+\lambda^2)e^{2(\lambda\int_{s_0}^s \kappa ds-C)}},
	\end{eqnarray*} this is
	\begin{equation*}
		\frac{(1+\lambda^2)}{\sqrt{1+\lambda^2}}e^{B-C}=\pm\frac{\sqrt{1-(1+\lambda^2)e^{2(\lambda\int_{s_0}^s \kappa ds-C)}}	}{\sqrt{1-e^{2(\lambda\int_{s_0}^s \kappa ds-B)}}},
	\end{equation*}and this implies that $(1+\lambda^2)e^{-2C}=e^{-2B},$
	the other cases not affecting the relationship between the constants $B$ and $C$.
	Therefore
	\begin{equation*}
		\cos\phi=\left<\textbf{t},(0,0,1)\right>=\left<\textbf{t},\textbf{d}\right>=\xi=\pm\frac{e^{(\lambda\int_{s_0}^s{\kappa ds}-B)}}{\sqrt{1+\lambda^2}}.
	\end{equation*}
	Now, to find the angle $\theta$, let us use the equation:
	\begin{equation*}
		\frac{\theta'\sin^2 \phi}{\kappa}=\left<\textbf{b},(0,0,1)\right>=\frac{1}{\tau}\left<\frac{d\textbf{n}}{ds},(0,0,1)\right>+\frac{\kappa}{\tau}\left<\textbf{t},(0,0,1)\right>.
	\end{equation*}
	Replacing
	\begin{equation*}
		\xi=\frac{e^{(\lambda\int_{s_0}^s{\kappa ds}-B)}}{\sqrt{1+\lambda^2}},\ \  \ \  \tau=\pm\frac{\sqrt{1+\lambda^2} \kappa e^{(\lambda\int_{s_0}^s \kappa ds-B)}}{\sqrt{1-e^{2(\lambda\int_{s_0}^s \kappa ds-B)}}},	
	\end{equation*} we have
	\begin{equation*}
		\theta'=\pm\frac{(1+\lambda^2)\kappa\sqrt{1-e^{2(\lambda\int_{s_0}^s \kappa ds-B)}}}{1+\lambda^2-e^{2(\lambda\int_{s_0}^s\kappa ds-B)}}, \ \ \text{respectively}.
	\end{equation*}
	Analogously, when considering the case
	\begin{equation*}
		\xi=-\frac{e^{(\lambda\int_{s_0}^s{\kappa ds}-B)}}{\sqrt{1+\lambda^2}},\ \  \ \  \tau=\pm\frac{\sqrt{1+\lambda^2} \kappa e^{(\lambda\int_{s_0}^s \kappa ds-B)}}{\sqrt{1-e^{2(\lambda\int_{s_0}^s \kappa ds-B)}}},	
	\end{equation*}
	we have
	\begin{equation*}
		\theta'=\mp\frac{(1+\lambda^2)\kappa\sqrt{1-e^{2(\lambda\int_{s_0}^s \kappa ds-B)}}}{1+\lambda^2-e^{2(\lambda\int_{s_0}^s\kappa ds-B)}}, \ \ \text{respectively}.
	\end{equation*}
	If we replace the different cases that $\xi$ and $\theta$ take in (\ref{vector tangente}), then several expressions are generated for the tangent vector, which are simplified in the following two, after applying the respective rigid motion. 
	\begin{eqnarray*}
		\textbf{t} =(&&\sqrt{1-\frac{e^{2(\lambda\int_{s_0}^s{\kappa}ds-B)}}{1+\lambda^2}}\cos \int{\frac{(1+\lambda^2)\kappa\sqrt{1-e^{2(\lambda\int_{s_0}^s \kappa ds-B)}}}{1+\lambda^2-e^{2(\lambda\int_{s_0}^s \kappa ds-B)}}}ds,\\ && \sqrt{1-\frac{e^{2(\lambda\int_{s_0}^s{\kappa}ds-B)}}{1+\lambda^2}}\sin \int{\frac{(1+\lambda^2)\kappa\sqrt{1-e^{2(\lambda\int_{s_0}^s \kappa ds-B)}}}{1+\lambda^2-e^{2(\lambda\int_{s_0}^s \kappa ds-B)}}}ds,\\&&\pm\frac{e^{(\lambda\int_{s_0}^s{\kappa}ds-B)}}{\sqrt{1+\lambda^2}}). 	
	\end{eqnarray*} And since 
	\begin{eqnarray*}
		&&\int{\frac{(1+\lambda^2)\kappa\sqrt{1-e^{2(\lambda\int_{s_0}^s \kappa ds-B)}}}{1+\lambda^2-e^{2(\lambda\int_{s_0}^s \kappa ds-B)}}}ds\\
		&=&\int{\frac{-\lambda^2\kappa e^{2(\lambda\int_{s_0}^s \kappa ds-B)}}{\sqrt{1-e^{2(\lambda\int_{s_0}^s \kappa ds-B)}}(1+\lambda^2-e^{2(\lambda\int_{s_0}^s \kappa ds-B)})}}ds\\&&+\int{\frac{\kappa}{\sqrt{1-e^{2(\lambda\int_{s_0}^s \kappa ds-B)}}}}ds\\&=&\arctan[\frac{\sqrt{1-e^{2(\lambda\int_{s_0}^s{\kappa}ds-B)}}}{\lambda}]-
		\frac{\emph{arctanh}[\sqrt{1-e^{2(\lambda\int_{s_0}^s{\kappa}ds-B)}}]}{\lambda},	
	\end{eqnarray*} with the integration constant equal to zero, the desired implication is obtained, this means we have the representation of the position vector, except for a rigid motion.\\
	To prove the reciprocal, it is enough to check that the curvature and torsion of the natural representation of the curve $\alpha$ are given by
		\begin{equation*}
		\kappa_{\alpha}=\mid\mid \alpha''\mid\mid=\kappa,\ \ \ \	\tau_{\alpha}=\frac{\alpha'\wedge \alpha''\cdot \alpha'''}{\kappa^2}=\pm\frac{\sqrt{1+\lambda^2} \kappa e^{(\lambda\int_{s_0}^s \kappa ds-B)}}{\sqrt{1-e^{2(\lambda\int_{s_0}^s \kappa ds-B)}}}, \ \text{respectively}.
	\end{equation*}
	And a direct calculation shows that the curvature $\kappa_{\alpha}$ and the torsion $\tau_{\alpha}$ satisfy the intrinsic equation:
	\begin{equation*}
		\frac{(1+\lambda^2)(\frac{\tau_{\alpha}}{\kappa_{\alpha}})'}{\lambda(1+\lambda^2+(\frac{\tau_{\alpha}}{\kappa_{\alpha}})^2)}=\tau_{\alpha},\ \text{respectively}.
	\end{equation*}
\end{proof}
\begin{remark}\label{Ecuacion simplificada}
	The whirl curve $\alpha(s)=(x(s),y(s),z(s))$ can be written in the following two ways
	\begin{eqnarray*}
		x(s)&=&\int\frac{\lambda}{\sqrt{1+\lambda^2}}\cos(\frac{\emph{arctanh}[\sqrt{1-e^{2(\lambda\int_{s_0}^s{\kappa}ds-B)}}]}{\lambda})\\
		&&+\frac{\sqrt{1-e^{2(\lambda\int_{s_0}^s{\kappa}ds-B)}}}{\sqrt{1+\lambda^2}}\sin(\frac{\emph{arctanh}[\sqrt{1-e^{2(\lambda\int_{s_0}^s{\kappa}ds-B)}}]}{\lambda})ds,\\ \\
		y(s)&=&\int\frac{\sqrt{1-e^{2(\lambda\int_{s_0}^s{\kappa}ds-B)}}}{\sqrt{1+\lambda^2}}\cos(\frac{\emph{arctanh}[\sqrt{1-e^{2(\lambda\int_{s_0}^s{\kappa}ds-B)}}]}{\lambda})
		\\&&-\frac{\lambda}{\sqrt{1+\lambda^2}}\sin(\frac{\emph{arctanh}[\sqrt{1-e^{2(\lambda\int_{s_0}^s{\kappa}ds-B)}}]}{\lambda})ds,\\ \\
		z(s)&=&\pm\int{\frac{e^{(\lambda\int_{s_0}^s{\kappa}ds-B)}}{\sqrt{1+\lambda^2}}}ds, \ \text{where}\  \lambda\int_{s_0}^s{\kappa}ds<B,
	\end{eqnarray*}
	with curvature $\kappa_{\alpha}=\kappa$ and torsion $\tau_{\alpha}=\pm\frac{\sqrt{1+\lambda^2}  e^{(\lambda\int_{s_0}^s \kappa ds-B)}\kappa}{\sqrt{1-e^{2(\lambda\int_{s_0}^s\kappa ds-B)}}}$, respectively.\\ 
\end{remark}
\section{APPLICATIONS}
By writing the quotient $\frac{\tau(s)}{\kappa(s)}=h(s)$ and replacing in (\ref{Curvatura}) and (\ref{Exponencial}), we have
\begin{equation}\label{Curvatura y formula}
	\kappa(s)=\frac{(1+\lambda^2)h'(s)}{\lambda h(s)(1+\lambda^2+(h(s))^2)}\ \ \text{and} \ 	\ 	e^{\lambda\int_{s_0}^s\kappa(s) ds-B}=\displaystyle\frac{\frac{1}{\sqrt{1+\lambda^2}}\mid h(s)\mid}{\sqrt{1+\frac{1}{1+\lambda^2}(h(s))^2}}
\end{equation}  and these expressions allow us to construct a great variety of interesting curves.
\subsection{APPLICATIONS TO THE RECTIFYING CURVES}
The notion of rectifying curve has been introduced by Chen and is defined as a unit speed curve such that position vector always lies in its rectifying plane. He found a simple characterization in terms of the ratio $\tau/\kappa$, for the rectifying curves. He proved the following theorem (For the proof of the theorem, see \cite{ChenBY:03})
\begin{theorem}\label{Caracterizacion 2}
	Let $\textbf{x}:I\rightarrow \mathbb{R}^3$ be a curve with $\kappa>0$. Then $\textbf{x}$ is congruent to a rectifying curve if and only if the ratio of torsion and curvature of the curve is a nonconstant linear function in arclength function $s$, i.e.,$\tau/\kappa=c_1s+c_2$ for some constants $c_1$ and $c_2$, with $c_1\neq 0.$
\end{theorem}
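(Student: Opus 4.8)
The plan is to prove both implications directly from the Frenet--Serret equations, using that the rectifying plane at $\textbf{x}(s)$ is the plane spanned by the tangent $\textbf{t}$ and the binormal $\textbf{b}$ (equivalently, the plane orthogonal to the principal normal $\textbf{n}$).

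For the forward implication, note first that the ratio $\tau/\kappa$ is invariant under rigid motions, so I may assume $\textbf{x}$ itself is a rectifying curve; its position vector then lies in the rectifying plane and can be written as
\[
\textbf{x}(s)=\lambda(s)\,\textbf{t}(s)+\mu(s)\,\textbf{b}(s)
\]
for some differentiable functions $\lambda,\mu$. Differentiating and substituting $\textbf{t}'=\kappa\textbf{n}$ and $\textbf{b}'=-\tau\textbf{n}$ gives
\[
\textbf{t}=\lambda'\,\textbf{t}+(\lambda\kappa-\mu\tau)\,\textbf{n}+\mu'\,\textbf{b}.
\]
Matching components in the orthonormal frame $\{\textbf{t},\textbf{n},\textbf{b}\}$ produces the three scalar equations $\lambda'=1$, $\lambda\kappa-\mu\tau=0$, and $\mu'=0$. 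Hence $\lambda(s)=s+a$ and $\mu$ is a constant $b$; moreover $b\neq 0$, since $b=0$ would force $\lambda\kappa=0$ and hence $\kappa=0$, contradicting $\kappa>0$. The middle equation then gives $\tau/\kappa=\lambda/\mu=(s+a)/b$, a linear function of $s$ with nonzero slope $1/b$, which is exactly the asserted form with $c_1=1/b\neq 0$.

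For the converse, suppose $\tau/\kappa=c_1 s+c_2$ with $c_1\neq 0$. The idea is to reconstruct, up to translation, the rectifying position vector guessed in the first part. Set $\lambda(s)=s+c_2/c_1$ and $\mu=1/c_1$, so that $\lambda/\mu=c_1 s+c_2=\tau/\kappa$ and therefore $\lambda\kappa=\mu\tau$. Define the auxiliary vector field $\textbf{Z}(s)=\lambda(s)\,\textbf{t}(s)+\mu\,\textbf{b}(s)$; differentiating and using the Frenet equations, its $\textbf{n}$-component is $\lambda\kappa-\mu\tau=0$ and its $\textbf{b}$-component is $\mu'=0$, so that $\textbf{Z}'=\lambda'\,\textbf{t}=\textbf{t}=\textbf{x}'$. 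Consequently $\textbf{Z}(s)=\textbf{x}(s)+\textbf{c}$ for some constant vector $\textbf{c}$, and the translated curve $\textbf{x}+\textbf{c}=\textbf{Z}=\lambda\textbf{t}+\mu\textbf{b}$ has position vector in its rectifying plane for every $s$. Thus $\textbf{x}$ is congruent to a rectifying curve.

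The underlying computations are routine; the two points that genuinely require attention are the verification that $b\neq 0$ in the forward direction, which is precisely what makes the linear function nonconstant, and the careful bookkeeping of the word \emph{congruent}: in each direction the position vector lands in the rectifying plane only after an appropriate translation, so one must retain the constant vector $\textbf{c}$ rather than assume the curve already passes through a distinguished origin.
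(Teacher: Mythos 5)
The paper does not actually prove this theorem: it is Chen's characterization of rectifying curves, quoted with a pointer to the reference \cite{ChenBY:03}, so there is no internal proof to compare against. Your argument is correct and is essentially the standard proof from that reference --- decompose the position vector as $\lambda\textbf{t}+\mu\textbf{b}$, read off $\lambda'=1$, $\mu'=0$, $\lambda\kappa=\mu\tau$ from the Frenet equations, and reverse the construction for the converse --- with the only cosmetic remark being that in your exclusion of $\mu=0$ the conclusion $\kappa\equiv 0$ follows from $\lambda\kappa\equiv 0$ only after noting that $\lambda(s)=s+a$ vanishes at a single point and invoking continuity of $\kappa$.
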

In this section we will find a  whirl curve that satisfies the definition of being a rectifying curve. For this we consider the theorem \ref{Caracterizacion}, the theorem \ref{parametrizacion}, the remark \ref{Ecuacion simplificada}, the theorem   \ref{Caracterizacion 2} and the formulas (\ref{Curvatura y formula}). The proof is not difficult, it just takes a bit of work to calculate the integral and verify that the curvature and torsion satisfy the respective intrinsic equations.
\begin{theorem}
	Let $\sigma:I\rightarrow \mathbb{R}^3$ be a curve, with positive $\kappa_{\sigma}=\kappa_{\sigma}(s)$ curvature and nonzero $\tau_{\sigma}=\tau_{\sigma}(s)$ torsion, both differentiable functions.\\
	Then $\sigma$ is congruent to a whirl curve and to a rectifying curve if and only if its position vector $\sigma(s)=(x(s),y(s),z(s))$, except for a rigid motion, has natural representation of the form:
	\begin{eqnarray*}
		x(s)&=&\frac{(b+as)}{a}\frac{\lambda}{\sqrt{1+\lambda^2}}\cos{[\frac{arctanh(\sqrt{\frac{1+\lambda^2}{1+(b+as)^2+\lambda^2}})}{\lambda}]},\\
		y(s)&=&-\frac{(b+as)}{a}\frac{\lambda}{\sqrt{1+\lambda^2}}\sin{[\frac{arctanh(\sqrt{\frac{1+\lambda^2}{1+(b+as)^2+\lambda^2}})}{\lambda}]},\\
		z(s)&=&\frac{1}{a}\frac{\sqrt{1+(b+as)^2+\lambda^2}}{\sqrt{1+\lambda^2}},
	\end{eqnarray*} where $\lambda$, $a$ and $b$ are constants, with $\lambda\neq 0$, $a\neq 0$ and the curve $\sigma$ is defined for $as+b>0$, respectively $as+b<0$.
\end{theorem}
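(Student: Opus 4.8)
The plan is to prove the stated equivalence in both directions, gluing the whirl condition and the rectifying condition through their respective characterizations and the explicit parametrization already at our disposal. Throughout I would set $h(s)=\tau_\sigma(s)/\kappa_\sigma(s)$ and $w=as+b$. For the forward implication, assume $\sigma$ is congruent both to a whirl curve and to a rectifying curve. By Chen's characterization (Theorem \ref{Caracterizacion 2}), being rectifying forces $h(s)=as+b$ for constants $a\neq 0$ and $b$, a nonconstant linear function of arc length. Substituting this $h$ into the second formula in (\ref{Curvatura y formula}), I would evaluate the exponential factor
\[
E(s):=e^{\lambda\int_{s_0}^s\kappa\,ds-B}=\frac{\abs{as+b}}{\sqrt{1+\lambda^2+(as+b)^2}},
\]
so that $1-E^2=\dfrac{1+\lambda^2}{1+\lambda^2+(as+b)^2}$ and $\sqrt{1-E^2}=\sqrt{\dfrac{1+\lambda^2}{1+\lambda^2+(as+b)^2}}$; the latter is exactly the argument of the $\emph{arctanh}$ appearing in the statement. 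This is the computation that turns the abstract integrals of Theorem \ref{parametrizacion} into the claimed closed forms.

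Next I would feed $E(s)$ into the simplified representation of Remark \ref{Ecuacion simplificada}. The $z$-component is immediate: since $z'(s)=\pm E(s)/\sqrt{1+\lambda^2}$, a single antiderivative gives $z(s)=\frac{1}{a}\sqrt{1+\lambda^2+(as+b)^2}/\sqrt{1+\lambda^2}$, the restriction $as+b>0$ (respectively $as+b<0$) fixing the branch of $\abs{\cdot}$ and the sign. For the $x$- and $y$-components the efficient route is to differentiate the proposed closed forms and match them against the integrands of Remark \ref{Ecuacion simplificada}, rather than integrate directly. Writing $\psi(s)=\tfrac{1}{\lambda}\,\emph{arctanh}\sqrt{1-E^2}$, the whole verification reduces to the single identity $-\tfrac{\lambda w}{a}\,\psi'(s)=\sqrt{1-E^2}$, which I would obtain from $\frac{d}{ds}\emph{arctanh}(u)=(1-u^2)^{-1}u'$ with $u=\sqrt{1-E^2}$. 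Once this identity is in hand, the product-rule terms in $x'$ and $y'$ collapse precisely to the $\sin\psi$- and $\cos\psi$-coefficients of the Remark's integrands, yielding the stated $x(s)$ and $y(s)$.

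For the converse I would argue directly from the explicit parametrization. Differentiating the given $\sigma$, one checks $\|\sigma'\|=1$ (so $s$ is indeed arc length), and then computes $\kappa_\sigma=\|\sigma''\|$ and $\tau_\sigma=(\sigma'\wedge\sigma'')\cdot\sigma'''/\kappa_\sigma^2$. The point is to exhibit $\tau_\sigma/\kappa_\sigma=as+b$, a nonconstant linear function: by Theorem \ref{Caracterizacion 2} this makes $\sigma$ congruent to a rectifying curve, while verifying that the same $\kappa_\sigma,\tau_\sigma$ satisfy the intrinsic equation of Theorem \ref{Caracterizacion} makes $\sigma$ a whirl curve.

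I expect the main obstacle to be bookkeeping rather than anything conceptual. In the forward direction the derivative matching for $x$ and $y$ demands careful tracking of the nested chain rule through $\emph{arctanh}\sqrt{1-E^2}$ and of the branch choices encoded by $\abs{as+b}$, which are exactly the two cases $as+b>0$ and $as+b<0$ flagged in the statement. In the converse the corresponding nuisance is differentiating the trigonometric-of-$\psi$ factors three times to reach $\sigma'''$; organizing $x,y$ as $\bigl(\lambda w/(a\sqrt{1+\lambda^2})\bigr)$ times $\cos\psi,-\sin\psi$ and reusing the identity $-\tfrac{\lambda w}{a}\psi'=\sqrt{1-E^2}$ at each stage should keep these computations under control.
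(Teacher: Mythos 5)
Your proposal is correct and follows essentially the same route the paper indicates (the paper itself only sketches this proof in one sentence, pointing to Theorem \ref{Caracterizacion}, Theorem \ref{parametrizacion}, Remark \ref{Ecuacion simplificada}, Theorem \ref{Caracterizacion 2} and the formulas (\ref{Curvatura y formula})): you substitute the linear ratio $h=as+b$ from Chen's characterization into the exponential of (\ref{Curvatura y formula}) and then evaluate the integrals of Remark \ref{Ecuacion simplificada}, with the converse done by direct computation of $\kappa_\sigma$ and $\tau_\sigma$. Your key identity $-\tfrac{\lambda w}{a}\psi'=\sqrt{1-E^2}$ checks out and is exactly the ``bit of work to calculate the integral'' the paper alludes to.
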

The previous results motivate us to give the following definition.
\begin{definition}
	Let $\sigma:I\rightarrow \R^3$ be a unit speed curve with curvature $\kappa_{\sigma}>0$ and let $\{\kappa_{\sigma},\tau_{\sigma},\textbf{t},\textbf{n},\textbf{b} \}$ be the Frenet-Serret apparatus of $\sigma$. Then $\sigma$ is said to be a whirl-rectifying curve if it satisfies the following two conditions:
	\begin{enumerate}
		\item	There exists a constant vector $\textbf{d}$ of norm one such that\\
		$\left<\textbf{n}(s),\textbf{d}\right>=\lambda\left<\textbf{t}(s),\textbf{d}\right>$, for some constant $\lambda$ other than zero, 
		\item $\left<\sigma(s),\textbf{n}(s)\right>=0.$
	\end{enumerate}
\end{definition}
\begin{theorem}
	Let $\sigma:I\rightarrow \mathbb{R}^3$ be a curve, with positive curvature $\kappa_{\sigma}=\kappa_{\sigma}(s)$ and nonzero torsion $\tau_{\sigma}=\tau_{\sigma}(s)$, both differentiable functions.\\
	If $\sigma$ is a whirl-rectifying curve given by $\sigma(s)=(x(s),y(s),z(s))$, where
	\begin{eqnarray*}
		x(s)&=&\frac{(b+as)}{a}\frac{\lambda}{\sqrt{1+\lambda^2}}\cos{[\frac{arctanh(\sqrt{\frac{1+\lambda^2}{1+(b+as)^2+\lambda^2}})}{\lambda}]},\\
		y(s)&=&-\frac{(b+as)}{a}\frac{\lambda}{\sqrt{1+\lambda^2}}\sin{[\frac{arctanh(\sqrt{\frac{1+\lambda^2}{1+(b+as)^2+\lambda^2}})}{\lambda}]},\\
		z(s)&=&\frac{1}{a}\frac{\sqrt{1+(b+as)^2+\lambda^2}}{\sqrt{1+\lambda^2}},
	\end{eqnarray*}where $\lambda$, $a$ and $b$ are constants, with $\lambda\neq 0$, $a\neq 0$ and the curve $\sigma$ is defined for\\ $as+b>0$, respectively $as+b<0$, then $\sigma$ lives in two-leaf hyperboloid
	\begin{equation*} z^2-\frac{x^2}{\lambda^2}-\frac{y^2}{\lambda^2}=\frac{1}{a^2},\end{equation*} and is also a geodesic curve on the cone with vertex at the origin and parametrized by $\textbf{X}(t,u)=u\textbf{w}(t)$, where $u\in \R^+$ and $w=w(t)$ is a curve with unit speed on the unitary sphere with center at the origin and parametrized by
	\\ \\
	$\textbf{w}(t)=(\frac{\mid a\mid}{a}\frac{\lambda}{\sqrt{1+\lambda^2}}\sin[d+t]\cos[\frac{arctanh[\frac{\sqrt{1+\lambda^2}}{\sqrt{1+\tan^2[d+t]+\lambda^2}}]}{\lambda}],\\ \\ -\frac{\mid a\mid}{a}\frac{\lambda}{\sqrt{1+\lambda^2}}\sin[d+t]\sin[\frac{arctanh[\frac{\sqrt{1+\lambda^2}}{\sqrt{1+\tan^2[d+t]+\lambda^2}}]}{\lambda}],\frac{\mid a\mid}{a}\cos[d+t]\frac{\sqrt{1+\tan^2[d+t]+\lambda^2}}{\sqrt{1+\lambda^2}}),
	$\\ \\
	defined for $-d<t<-d+\frac{\pi}{2}$, respectively $-d-\frac{\pi}{2}<t<-d.$
\end{theorem}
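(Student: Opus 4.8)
The plan is to treat the two assertions separately: first the purely algebraic fact that $\sigma$ lies on the stated hyperboloid, which is a direct computation, and then the differential-geometric fact that $\sigma$ is a geodesic of the cone, for which I would exploit that a cone is a developable (flat) surface. For the hyperboloid I would simply square the coordinates. Since $\cos^2+\sin^2=1$ annihilates the common $\operatorname{arctanh}$ argument, one gets
\[
x(s)^2+y(s)^2=\frac{(b+as)^2}{a^2}\,\frac{\lambda^2}{1+\lambda^2},\qquad
z(s)^2=\frac{1}{a^2}\,\frac{1+(b+as)^2+\lambda^2}{1+\lambda^2}.
\]
Dividing the first expression by $\lambda^2$ and subtracting it from the second, the $(b+as)^2$ terms cancel and the factor $1+\lambda^2$ cancels, leaving $z^2-\frac{x^2}{\lambda^2}-\frac{y^2}{\lambda^2}=\frac{1}{a^2}$. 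As the right-hand side is a positive constant, the quadric is a two-sheeted hyperboloid and $\sigma$ lies on it.

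For the cone I would first record that $\textbf{w}=(w_1,w_2,w_3)$ is a unit-speed spherical curve. The identity $\cos^2+\sin^2=1$ together with $\cos^2[d+t]\tan^2[d+t]=\sin^2[d+t]$ gives $w_1^2+w_2^2=\frac{\lambda^2}{1+\lambda^2}\sin^2[d+t]$ and $w_3^2=\frac{1+\lambda^2\cos^2[d+t]}{1+\lambda^2}$, whose sum is $1$, so $\textbf{w}$ lies on the unit sphere; the relation $\lvert\textbf{w}'\rvert=1$ I would check by direct differentiation. Next I would show $\sigma$ lies on the cone $\textbf{X}(t,u)=u\,\textbf{w}(t)$ through the substitution $b+as=\tan[d+t]$. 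A short computation gives $\lvert\sigma(s)\rvert=\frac{\sqrt{1+(b+as)^2}}{\lvert a\rvert}$, and then, using $\frac{\tan[d+t]}{\sqrt{1+\tan^2[d+t]}}=\sin[d+t]$ and $\frac{1}{\sqrt{1+\tan^2[d+t]}}=\cos[d+t]$ on the domain (where $\cos[d+t]>0$), one verifies coordinate by coordinate that $\sigma(s)/\lvert\sigma(s)\rvert=\textbf{w}(t(s))$. Hence $\sigma(s)=u(s)\,\textbf{w}(t(s))$ with $u(s)=\lvert\sigma(s)\rvert$, so $\sigma$ is a curve on the cone (and it never reaches the vertex, since $u(s)\ge 1/\lvert a\rvert$).

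Finally, to prove that $\sigma$ is a geodesic I would use the first fundamental form. Because $\textbf{X}_t=u\textbf{w}'$, $\textbf{X}_u=\textbf{w}$, and $\textbf{w}$ is unit-speed on the sphere (so $\textbf{w}\cdot\textbf{w}=1$ and $\textbf{w}\cdot\textbf{w}'=0$), the induced metric is $du^2+u^2\,dt^2$. This is flat, and the map sending the surface coordinates $(t,u)$ to the plane point with polar coordinates $(r,\psi)=(u,t)$ is an isometric development of the cone onto a plane sector. Under this development the substitution $b+as=\tan[d+t]$ yields $u\cos[d+t]=\frac{1}{\lvert a\rvert}$, i.e. $r\cos(\psi+d)=\frac{1}{\lvert a\rvert}$, which is the polar equation of a straight line at distance $1/\lvert a\rvert$ from the origin. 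Since straight lines are precisely the geodesics of the plane and development is an isometry, $\sigma$, traversed at unit speed, is a geodesic of the cone.

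The argument splits along the signs $as+b>0$ and $as+b<0$ (equivalently the two sheets and the factor $\frac{\lvert a\rvert}{a}$), which only changes the branch of $\arctan/\operatorname{arctanh}$ while keeping $\cos[d+t]>0$; both cases are handled identically. I expect the only genuinely laborious step to be the verification that $\textbf{w}$ is unit speed, since differentiating through the nested $\operatorname{arctanh}$ is messy. An alternative route avoiding the development would be to show directly that the Frenet normal $\textbf{n}$ of $\sigma$ is parallel to the surface normal $\textbf{w}\times\textbf{w}'$ (equivalently, that the tangential component of $\sigma''$ vanishes), but the flat-development argument is the shorter one.
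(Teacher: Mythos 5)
Your argument is correct. The first two steps --- squaring the coordinates to land on the hyperboloid, and the substitution $b+as=\tan[d+t]$, $u=\sqrt{1+(b+as)^2}/\lvert a\rvert$ to exhibit $\sigma(s)=u(s)\textbf{w}(t(s))$ on the cone --- are exactly what the paper does. Where you genuinely diverge is the geodesic claim: the paper disposes of it in one sentence by asserting that a direct calculation shows the principal normal of $\sigma$ is parallel to the surface normal of the cone (the standard normal criterion for geodesics, with references to Chen and to the author's classification of cone geodesics), whereas you unroll the cone. Your route buys something concrete: once $\textbf{w}$ is known to be unit speed on the sphere, the first fundamental form is $du^2+u^2\,dt^2$, the development onto a plane sector of angle $\pi/2$ is an isometry, and the developed trace $r\cos(\psi+d)=1/\lvert a\rvert$ is visibly a straight line traversed at unit speed, so no second derivatives of $\sigma$ and no cross products ever need to be computed; the paper's undisplayed ``direct calculation'' is by contrast a fairly heavy Frenet computation. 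The one step you defer --- that $\lvert\textbf{w}'\rvert=1$ --- need not be done by brute-force differentiation: since $\sigma$ is unit speed by the definition of a whirl-rectifying curve and $\textbf{w}\cdot\textbf{w}'=0$, the identity $1=\lvert\sigma'\rvert^2=u'^2+u^2t'^2\lvert\textbf{w}'\rvert^2$ combined with $u'^2=\frac{(b+as)^2}{1+(b+as)^2}$ and $u^2t'^2=\frac{1}{1+(b+as)^2}$ forces $\lvert\textbf{w}'\rvert=1$; and even without this, the metric $du^2+u^2\lvert\textbf{w}'\rvert^2\,dt^2$ is still flat after reparametrizing $t$, so the development argument would survive intact.
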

\begin{proof}
	The curve is in the two-leaf hyperboloid, since
	\begin{equation*}
		z(s)^2-\frac{(x(s)^2+y(s)^2)}{\lambda^2}=\frac{1+(b+as)^2+\lambda^2}{a^2(1+\lambda^2)}-\frac{(b+as)^2}{a^2(1+\lambda^2)}=\frac{1}{a^2}.
	\end{equation*}Now taking 
	\begin{eqnarray*}t(s)&=&-d+\arctan{(b+as)},\\ u(s)&=&\frac{\sqrt{1+(b+as)^2}}{\mid a\mid},
	\end{eqnarray*} we see that $\textbf{X}(t(s),u(s))=u(s)\textbf{w}(t(s))=\sigma(s)$, which shows that the curve is on the cone.
	A direct calculation shows that the normal vector of the cone at point $\textbf{X}(t(s),u(s))$ is parallel to the normal vector of the curve at point $\sigma(s)$,
	which shows that $\sigma$ is a geodesic curve \cite{BANG-Y-C:07}, \cite{GueMoraDos}.
\end{proof}
\section{CONTINUOUS EXTENSIONS IN WHIRL CURVES}
Considering the equations that appear in (\ref{Curvatura y formula}), and writing $h(s)=as+b$, we have
\begin{equation*}
	\kappa(s)=\frac{(1+\lambda^2)a}{\lambda (as+b)(1+\lambda^2+(as+b)^2)}\ \ \text{and} \ 	\ 	e^{\lambda\int_{s_0}^s\kappa(s) ds-B}=\displaystyle\frac{\frac{1}{\sqrt{1+\lambda^2}}\mid as+b\mid}{\sqrt{1+\frac{1}{1+\lambda^2}(as+b)^2}}.
\end{equation*}And it is clear that $\kappa=\kappa(s)$ is not defined in all real numbers.\\
The whirl-rectifying curve $\sigma$ can be extended continuously to the whole set of real numbers. Additionally, its respective radial projection $\textbf{w}=\textbf{w}(t)$, on the unitary sphere with center at the origin, can be extended continuously to the entire interval\\ $(-d-\frac{\pi}{2},-d+\frac{\pi}{2})$, as the following results show 
\begin{theorem}
	Let $a$, $b$ and $\lambda$ be constant with $a\neq 0$ and $\lambda\neq 0$.\\
	The curve $\varOmega:\R\rightarrow\R^3$	given by $\varOmega(s)=(x(s),y(s),z(s))$, where
	\begin{eqnarray*}
		x(s)&=&\frac{(b+as)}{a}\frac{\lambda}{\sqrt{1+\lambda^2}}\cos{[\frac{arctanh(\sqrt{\frac{1+\lambda^2}{1+(b+as)^2+\lambda^2}})}{\lambda}]},\\ 
		y(s)&=&-\frac{(b+as)}{a}\frac{\lambda}{\sqrt{1+\lambda^2}}\sin{[\frac{arctanh(\sqrt{\frac{1+\lambda^2}{1+(b+as)^2+\lambda^2}})}{\lambda}]},\\
		z(s)&=&\frac{1}{a}\frac{\sqrt{1+(b+as)^2+\lambda^2}}{\sqrt{1+\lambda^2}};\  \text{defined for}\ s\neq -\frac{b}{a}, \ \text{and}
	\end{eqnarray*} 
	\begin{eqnarray*}
		x(s)&=&0,\\ y(s)&=&0,\\ z(s)&=&\frac{1}{a};\  \text{defined for}\ s= -\frac{b}{a}.
	\end{eqnarray*}
	is continuous in all the set of the	real numbers and the restrictions $\varOmega\mid_{(-\infty,-\frac{b}{a})}$, $\varOmega\mid_{(-\frac{b}{a},\infty)}$ are whirl-rectifying curves.
\end{theorem}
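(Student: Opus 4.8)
The plan is to split the statement into its two assertions and handle them separately: first, that $\varOmega$ is continuous on all of $\R$, and second, that the two restrictions $\varOmega\mid_{(-\infty,-\frac{b}{a})}$ and $\varOmega\mid_{(-\frac{b}{a},\infty)}$ are whirl-rectifying curves. The second assertion will be essentially immediate from the preceding characterizations, so the real work lies in verifying continuity at the single exceptional point $s_{0}=-\frac{b}{a}$, where the defining formulas are not valid and $\varOmega$ has been assigned the value $(0,0,\frac{1}{a})$.

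First I would observe that on each of the open intervals $(-\infty,-\frac{b}{a})$ and $(-\frac{b}{a},\infty)$ the quantity $b+as$ keeps a fixed sign (one interval produces $b+as<0$ and the other $b+as>0$, the assignment depending only on the sign of $a$). On such an interval the radicand $\frac{1+\lambda^{2}}{1+(b+as)^{2}+\lambda^{2}}$ lies strictly in $(0,1)$, so the $\mathrm{arctanh}$ is finite and each component of $\varOmega$ is a composition of continuous functions; hence $\varOmega$ is continuous on $\R\setminus\{-\frac{b}{a}\}$. Moreover, on each such interval $\varOmega$ is exactly the natural representation that characterizes whirl-rectifying curves in the preceding theorem (which in turn rests on Theorem \ref{Caracterizacion} and Theorem \ref{Caracterizacion 2}), so each restriction is a whirl-rectifying curve. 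This disposes of the second assertion.

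The remaining and principal point is continuity at $s_{0}=-\frac{b}{a}$, where $b+as_{0}=0$. Writing $w=b+as$, I would examine the three limits as $w\to 0$. For the third component direct substitution gives
\begin{equation*}
\lim_{w\to 0}z(s)=\frac{1}{a}\frac{\sqrt{1+\lambda^{2}}}{\sqrt{1+\lambda^{2}}}=\frac{1}{a},
\end{equation*}
which matches the assigned value. The delicate terms are $x$ and $y$: as $w\to 0$ the radicand tends to $1$, so the argument of $\mathrm{arctanh}$ approaches $1$ and the angle $\frac{1}{\lambda}\,\mathrm{arctanh}(\,\cdot\,)$ diverges to $\pm\infty$, making the cosine and sine factors oscillate without any limit. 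This is the one step where something could go wrong, and it is the main obstacle.

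The idea that defeats the obstacle is that the oscillating factors are harmless because they multiply a vanishing amplitude. Since the cosine and sine factors are bounded in absolute value by $1$, one has
\begin{equation*}
\lvert x(s)\rvert\le\frac{\lvert w\rvert}{\lvert a\rvert}\frac{\lvert\lambda\rvert}{\sqrt{1+\lambda^{2}}},\qquad \lvert y(s)\rvert\le\frac{\lvert w\rvert}{\lvert a\rvert}\frac{\lvert\lambda\rvert}{\sqrt{1+\lambda^{2}}},
\end{equation*}
and both right-hand sides tend to $0$ as $w\to 0$. By the squeeze theorem $x(s)\to 0$ and $y(s)\to 0$, so that $\lim_{s\to s_{0}}\varOmega(s)=(0,0,\frac{1}{a})=\varOmega(s_{0})$, the two one-sided limits agreeing with the prescribed value from both sides. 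Together with continuity away from $s_{0}$, this yields continuity on all of $\R$. The only genuine subtlety is precisely this interplay between the blow-up of the $\mathrm{arctanh}$ angle and the vanishing linear amplitude $w$; everything else reduces to routine substitution and an appeal to the earlier theorems.
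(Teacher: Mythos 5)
Your proof is correct and follows essentially the same route as the paper: a squeeze argument using the boundedness of the oscillating cosine and sine factors against the vanishing amplitude $\frac{(b+as)}{a}\frac{\lambda}{\sqrt{1+\lambda^2}}$, together with an appeal to the earlier characterization for the two restrictions. If anything, your version is cleaner, since the paper's displayed inequalities are printed with the bounds in the wrong order ($\lvert u\rvert\leq\cdots\leq-\lvert u\rvert$) and you state the squeeze correctly.
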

\begin{proof}
	From the following inequalities: 
	\\ \\
	$
	\mid\frac{(b+as)}{a}\frac{\lambda}{\sqrt{1+\lambda^2}}\mid	\leq\frac{(b+as)}{a}\frac{\lambda}{\sqrt{1+\lambda^2}}\cos{[\frac{arctanh(\sqrt{\frac{1+\lambda^2}{1+(b+as)^2+\lambda^2}})}{\lambda}]}\leq -\mid\frac{(b+as)}{a}\frac{\lambda}{\sqrt{1+\lambda^2}}\mid,\\ \\
	\mid\frac{(b+as)}{a}\frac{\lambda}{\sqrt{1+\lambda^2}}\mid	\leq\frac{(b+as)}{a}\frac{\lambda}{\sqrt{1+\lambda^2}}\sin{[\frac{arctanh(\sqrt{\frac{1+\lambda^2}{1+(b+as)^2+\lambda^2}})}{\lambda}]}\leq -\mid\frac{(b+as)}{a}\frac{\lambda}{\sqrt{1+\lambda^2}}\mid,
	$ \\ \\
	and from the definition of whirl-rectifying curve, the result follows.
\end{proof}
In a similar way, the following theorem is proved.
\begin{theorem}
	Let $a$, $b$ and $\lambda$ be constant with $a\neq 0$ and $\lambda\neq 0$.\\
	The curve $\varUpsilon:\R\rightarrow\R^3$	given by $\varUpsilon(t)=(x(t),y(t),z(t))$, where
	\begin{eqnarray*}
		x(t)&=&\frac{\mid a\mid}{a}\frac{\lambda}{\sqrt{1+\lambda^2}}\sin[d+t]\cos[\frac{arctanh[\frac{\sqrt{1+\lambda^2}}{\sqrt{1+\tan^2[d+t]+\lambda^2}}]}{\lambda}],\\ y(t)&=&-\frac{\mid a\mid}{a}\frac{\lambda}{\sqrt{1+\lambda^2}}\sin[d+t]\sin[\frac{arctanh[\frac{\sqrt{1+\lambda^2}}{\sqrt{1+\tan^2[d+t]+\lambda^2}}]}{\lambda}],\\
		z(t)&=&\frac{\mid a\mid}{a}\cos[d+t]\frac{\sqrt{1+\tan^2[d+t]+\lambda^2}}{\sqrt{1+\lambda^2}}, 
	\end{eqnarray*} defined for $ -d-\frac{\pi}{2}<t<-d+\frac{\pi}{2},\ t\neq -d$,  and 
	\begin{eqnarray*}
		x(t)&=&0,\\ y(t)&=&0,\\ z(t)&=&\frac{\mid a\mid}{a};\  \text{defined for}\ t= -d,
	\end{eqnarray*}
	is continuous in $(-d-\frac{\pi}{2},-d+\frac{\pi}{2})$ and 
	\begin{eqnarray*}
		u\varUpsilon\mid_{(-d-\frac{\pi}{2},-d)}(t)&=&\varOmega\mid_{(-\infty,-\frac{b}{a})}(s),\\
		u\varUpsilon\mid_{(-d,-d+\frac{\pi}{2})}(t)&=&\varOmega\mid_{(-\frac{b}{a},\infty)}(s),
	\end{eqnarray*}where   
	\begin{eqnarray*}t&=&t(s)=-d+\arctan{(b+as)},\\u&=&u(s)=\frac{\sqrt{1+(b+as)^2}}{\mid a\mid}.
	\end{eqnarray*}
\end{theorem}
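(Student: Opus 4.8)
The plan is to recognize $\varUpsilon$ as the continuous extension, across the removable singularity $t=-d$, of the spherical curve $\textbf{w}$ appearing in the geodesic theorem, and to verify that the reparametrization $t=t(s)$, $u=u(s)$ identifies $\varUpsilon$, rescaled by $u$, with $\varOmega$. Every computation rests on the single substitution $\tan(d+t)=b+as$, equivalently $\sin(d+t)=\frac{b+as}{\sqrt{1+(b+as)^2}}$ and $\cos(d+t)=\frac{1}{\sqrt{1+(b+as)^2}}$ for $d+t\in(-\frac{\pi}{2},\frac{\pi}{2})$, from which $\sqrt{1+\tan^2(d+t)+\lambda^2}=\sqrt{1+(b+as)^2+\lambda^2}$ follows at once. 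As in the preceding theorem there are two assertions: continuity on the whole open interval, and the two reparametrization identities.

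For continuity, away from $t=-d$ each coordinate is a composition of continuous functions, since there $\cos(d+t)>0$ and the argument of $\operatorname{arctanh}$ lies in $(0,1)$; so only the point $t=-d$ needs care. There I would argue by a squeeze, exactly as in the previous proof: because the $x$- and $y$-coordinates equal $\pm\frac{\abs{a}}{a}\frac{\lambda}{\sqrt{1+\lambda^2}}\sin(d+t)$ times a cosine, respectively sine, of bounded modulus, one has $\abs{x(t)},\abs{y(t)}\le\frac{\abs{\lambda}}{\sqrt{1+\lambda^2}}\abs{\sin(d+t)}\to 0$ as $t\to -d$, despite the $\operatorname{arctanh}$ argument tending to $1$ and its value blowing up. For the $z$-coordinate I would use the simplification $\cos\theta\,\sqrt{1+\lambda^2+\tan^2\theta}=\sqrt{1+\lambda^2\cos^2\theta}$ (valid for $\theta\in(-\frac{\pi}{2},\frac{\pi}{2})$), which shows $z(t)=\frac{\abs{a}}{a}\frac{\sqrt{1+\lambda^2\cos^2(d+t)}}{\sqrt{1+\lambda^2}}$ is in fact smooth at $t=-d$ with value $\frac{\abs{a}}{a}$, matching the prescribed value. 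This gives continuity on $(-d-\frac{\pi}{2},-d+\frac{\pi}{2})$.

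For the identities, I would substitute the above expressions for $\sin(d+t(s))$, $\cos(d+t(s))$, and $u(s)=\frac{\sqrt{1+(b+as)^2}}{\abs{a}}$ into $u(s)\varUpsilon(t(s))$ and simplify coordinatewise. In the first coordinate the factors $\sqrt{1+(b+as)^2}$ and $\abs{a}$ cancel, leaving $\frac{(b+as)}{a}\frac{\lambda}{\sqrt{1+\lambda^2}}$ times the cosine; since the $\operatorname{arctanh}$ arguments coincide, this is exactly $x_{\varOmega}(s)$, and the second coordinate follows identically. The third coordinate reduces, by the same cancellation and the trigonometric simplification, to $\frac{1}{a}\frac{\sqrt{1+(b+as)^2+\lambda^2}}{\sqrt{1+\lambda^2}}=z_{\varOmega}(s)$. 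Finally, since $t(s)=-d+\arctan(b+as)$ is a strictly monotone bijection carrying $(-\infty,-\frac{b}{a})$ onto $(-d-\frac{\pi}{2},-d)$ and $(-\frac{b}{a},\infty)$ onto $(-d,-d+\frac{\pi}{2})$, with orientation according to the sign of $a$, the two interval restrictions correspond as claimed.

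The main obstacle is the behaviour at $t=-d$: the $x$- and $y$-coordinates present a genuine product of a vanishing factor with an oscillating one, because the argument of the cosine and sine diverges, so continuity cannot be read off by evaluating the formula and must be secured by the squeeze estimate; the $z$-coordinate likewise hides a $0\cdot\infty$ indeterminacy that resolves only after the trigonometric simplification. Everything else is a direct, if slightly laborious, verification.
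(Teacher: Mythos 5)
Your proof is correct and follows the same route the paper intends: the paper establishes this theorem only by the remark that it is proved ``in a similar way'' to the preceding one, i.e.\ by the same squeeze estimate on the $x$- and $y$-coordinates at the removable singularity $t=-d$, which is exactly what you carry out. Your treatment is in fact more explicit than the paper's, since you also supply the simplification $\cos\theta\,\sqrt{1+\lambda^2+\tan^2\theta}=\sqrt{1+\lambda^2\cos^2\theta}$ resolving the $0\cdot\infty$ form in the $z$-coordinate and the coordinatewise verification of $u(s)\,\varUpsilon(t(s))=\varOmega(s)$ via $\tan(d+t)=b+as$, both of which the paper leaves to the reader.
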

\subsection{EXAMPLES}
If we take $a=0.65, b=0$ we can graph $\varOmega=\varOmega(s)$ for the different values of $\lambda=-20,-4,-1.8,-1,-0.5,-0.26$. In an analogous way it is graphed for their respective radial projections $\varUpsilon=\varUpsilon(t)$, on the unitary sphere with center at the origin.
\begin{figure}[!ht]
	\begin{center}
		\begin{tabular}{cccc}
			\includegraphics[viewport=0 0 700 849,scale=0.4,clip,scale=0.4,clip]{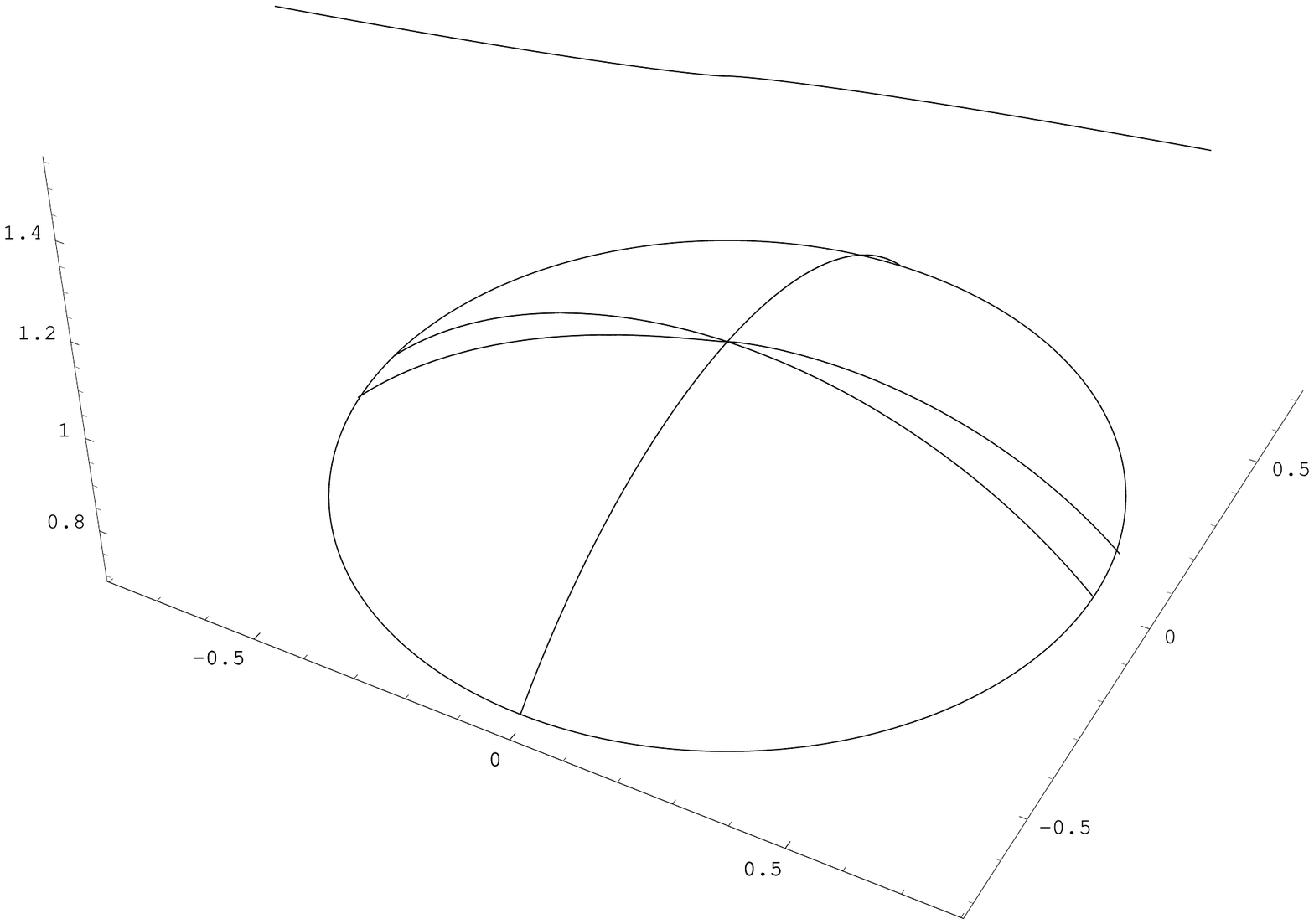} &\includegraphics[viewport=0 0 700 849,scale=0.4,clip,scale=0.4,clip]{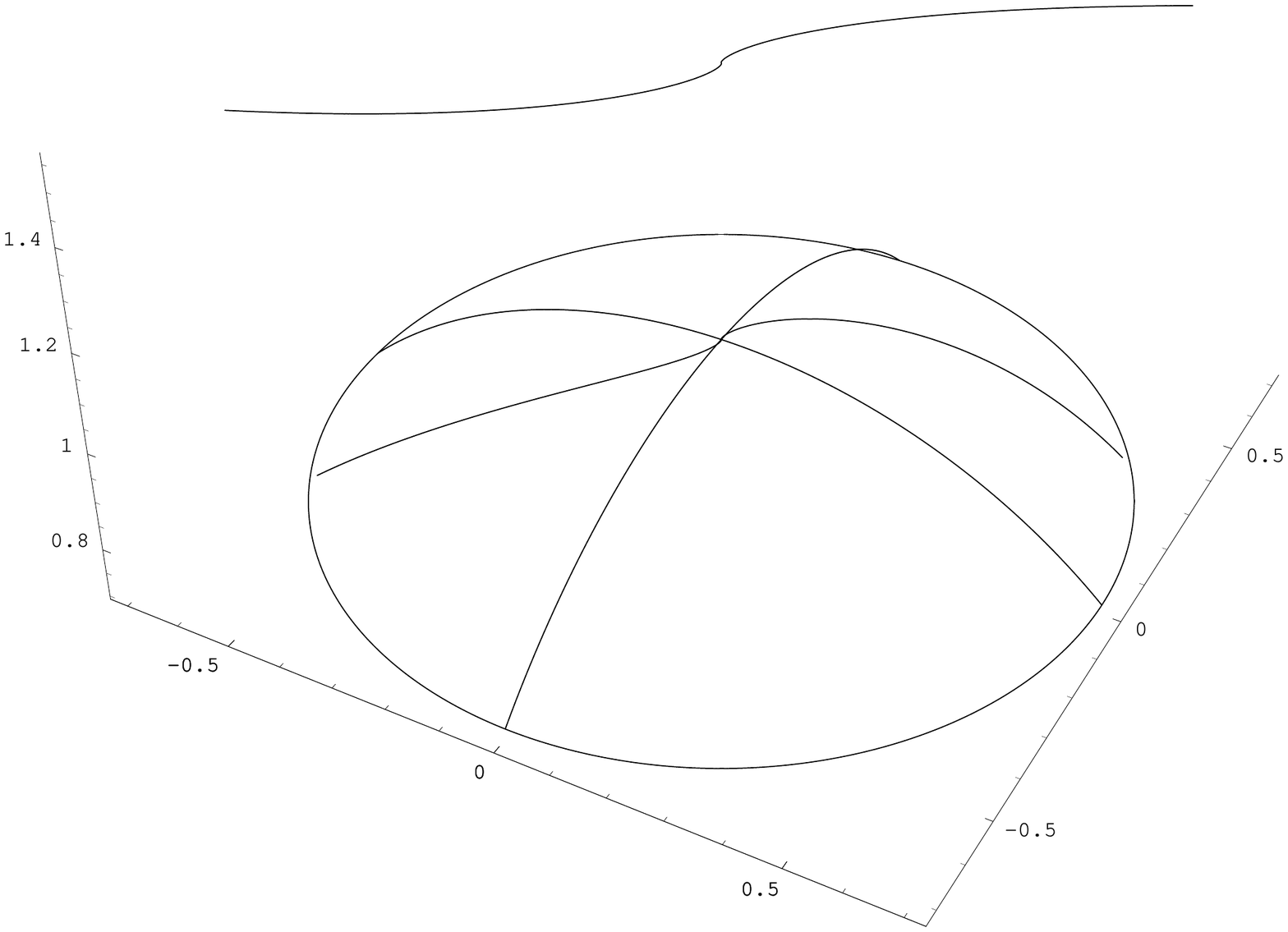} &\includegraphics[viewport=0 0 700 849,scale=0.4,clip,scale=0.4,clip]{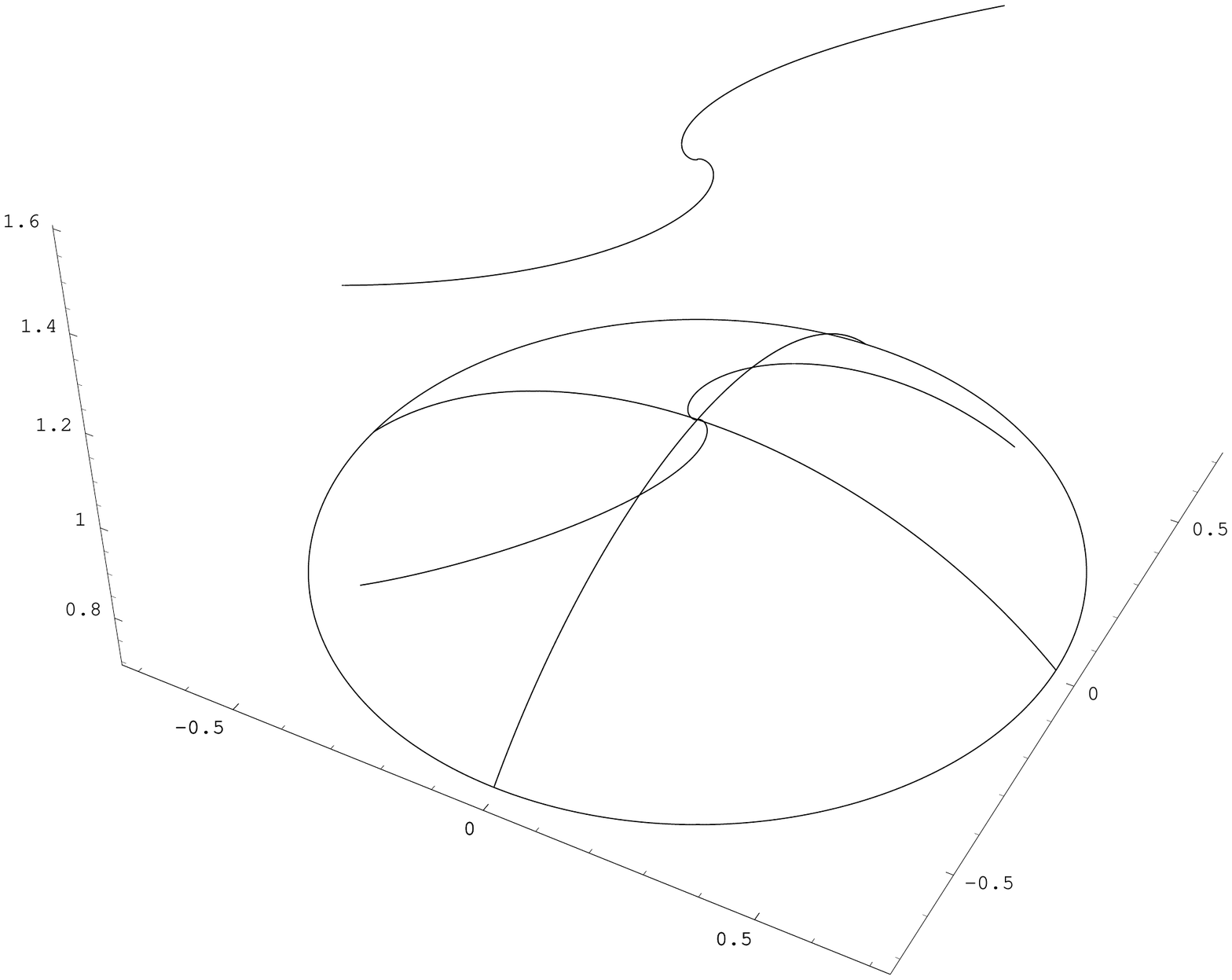} \\
			\text{$\lambda=-20$}&\text{$\lambda=-4$}&\text{$\lambda=-1.8$}&\\
			\includegraphics[viewport=0 0 700 849,scale=0.4,clip,scale=0.4,clip]{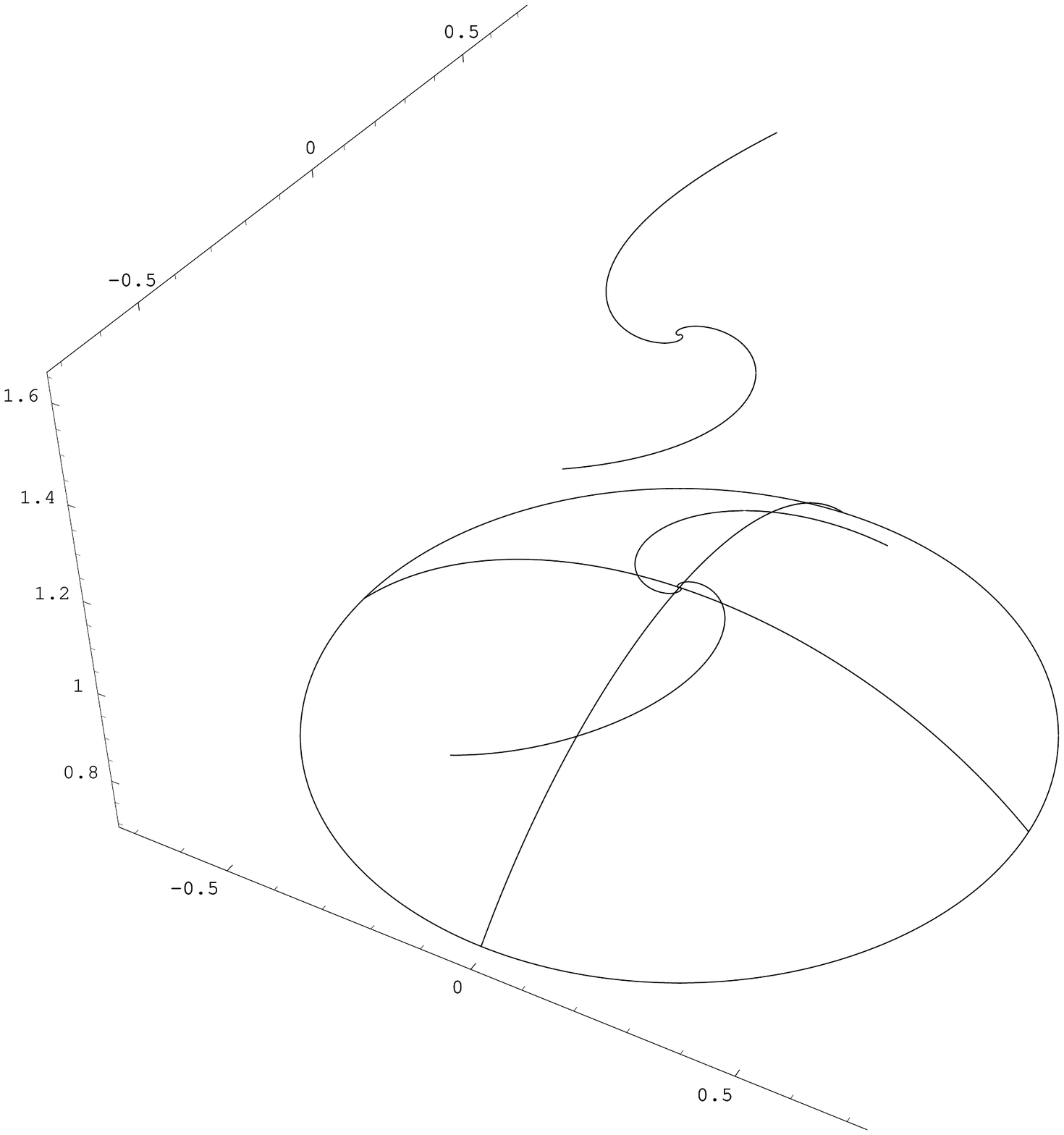} &\includegraphics[viewport=0 0 700 849,scale=0.4,clip,scale=0.4,clip]{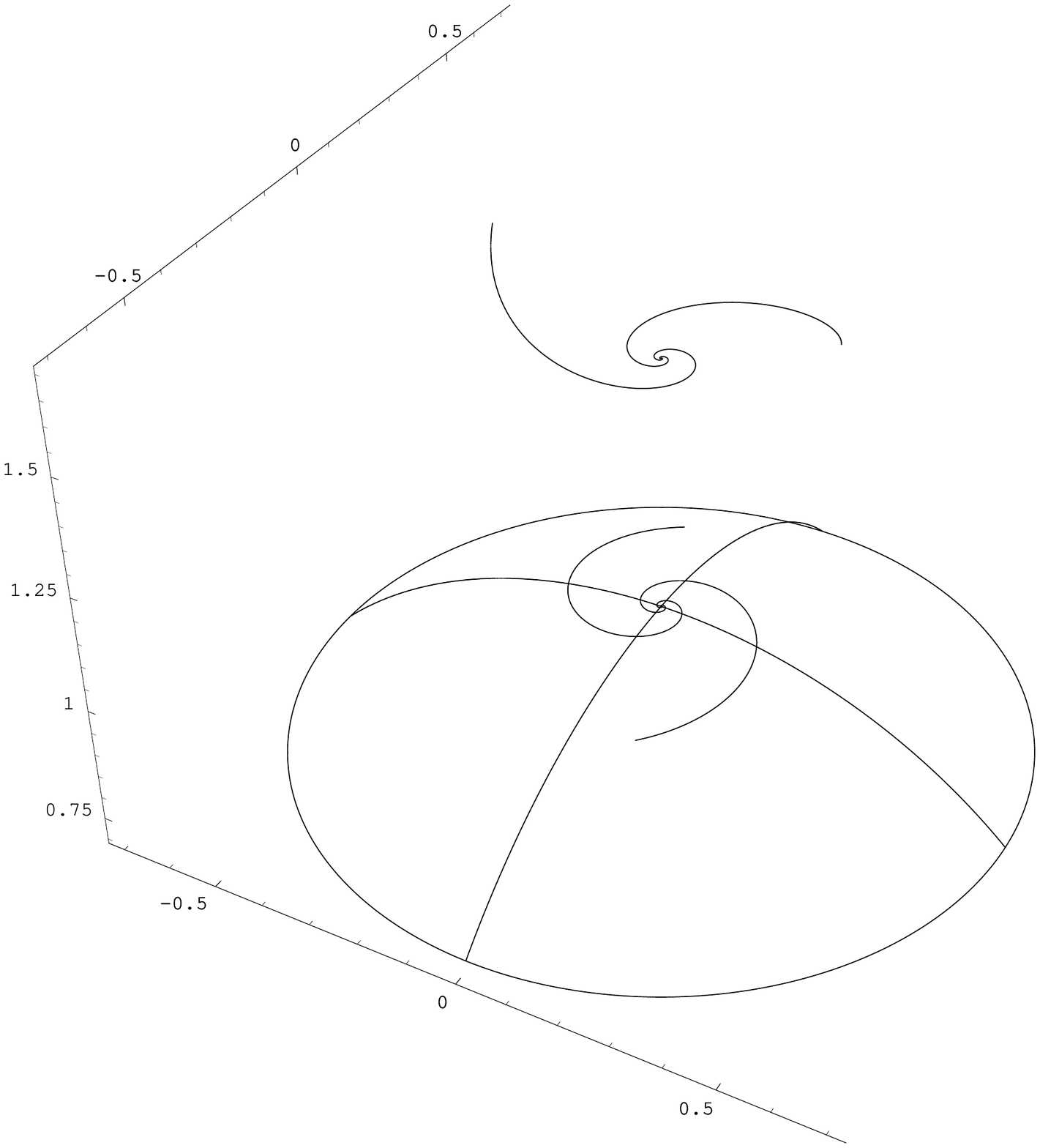} &\includegraphics[viewport=0 0 700 849,scale=0.4,clip,scale=0.4,clip]{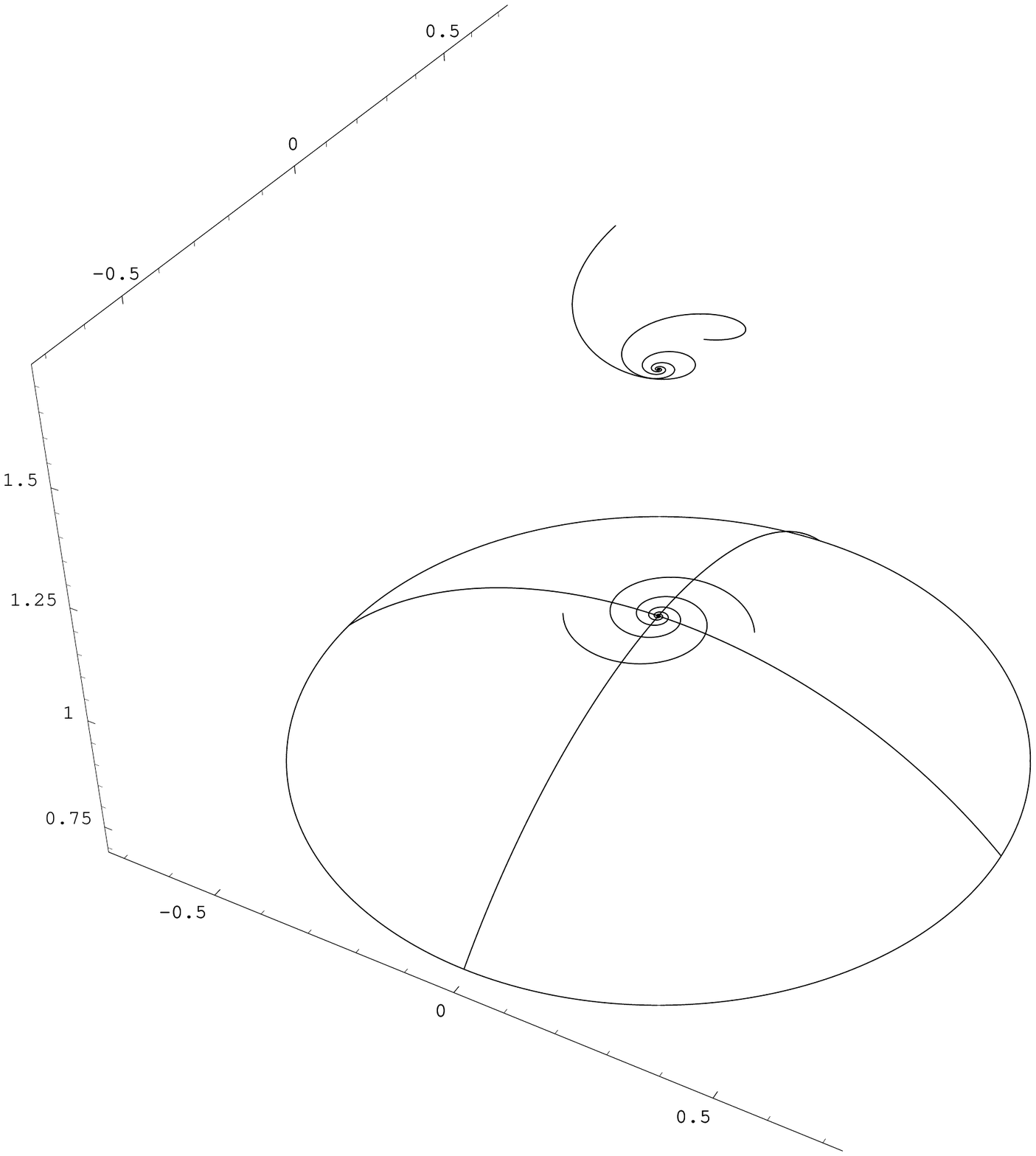} \\
			\text{$\lambda=-1$}&\text{$\lambda=-0.5$}&\text{$\lambda=-0.26$}&
		\end{tabular}
	\end{center}
	\caption{ Some $\varOmega=\varOmega(s)$ extensions of the whirl-rectifing curve with their respective radial projections $\varUpsilon=\varUpsilon(t)$, on the unitary sphere with center at the origin, for $a=0.65, b=d=0$, plotted for $s,t\in[-\frac{\pi}{4},\frac{\pi}{4}]$.}
\end{figure}
\bibliographystyle{amsplain}

\end{document}